\numberwithin{figure}{section}
\newcommand\norm[1]{\left\lVert#1\right\rVert}
\theoremstyle{plain}
\newtheorem{thm}{Theorem}[section]
\newtheorem{lem}[thm]{Lemma}
\newtheorem{cor}{Corollary}[thm]
\theoremstyle{definition}
\newtheorem{defn}{Definition}[section]
\newtheorem{exmp}{Example}[section]
\theoremstyle{remark}
\title{Non-existence of certain type of convex functions on a Riemannian manifold with a pole}
\author[A. A. Shaikh, C. K. Mondal and I. Ahmad]{Absos Ali Shaikh$^1$$^*$, Chandan Kumar Mondal$^2$ and Izhar Ahmad$^3$}
\address{\noindent\newline $^{1,2}$Department of Mathematics,\newline University of
Burdwan, Golapbag,\newline Burdwan-713104,\newline West Bengal, India}
\email{aask2003@yahoo.co.in, aashaikh@math.buruniv.ac.in}
\email{chan.alge@gmail.com}
\address{\noindent\newline $^3$Department of Mathematics and Statistics,\newline King Fahd University of
Petroleum and Minerals,\newline Dhahran-31261,\newline Saudi Arabia}
\email{drizhar@kfupm.edu.sa}
\begin{document}
\begin{abstract}
This paper is devoted to the study of non-existence of certain type of convex functions on a Riemannian manifold with a pole. To this end, we have developed the notion of odd and even function on a Riemannian manifold with a pole and proved the non-existence of non-trivial and non-negative differentiable odd convex function whose gradient is complete. Finally, we have deduced some isoperimetric type inequality related with convex function.
\end{abstract}
\noindent\footnotetext{ $^*$ Corresponding author.\\
$\mathbf{2010}$\hspace{5pt}Mathematics\; Subject\; Classification: 52A20, 52A30, 52A42, 53B20, 53C20, 53C22.\\ 
{Key words and phrases: Convex function, odd convex function, even convex function, manifold with a pole, negative point of a manifold, isoperimetric type inequality. } }
\maketitle
\section{Introduction}
The notion of convexity plays a crucial role in the field of economics, management science and optimization problems etc.  For instance, a (strictly) convex function on an open set of the Euclidean space has not more than one minimum. Consequently, the study of convex function and other related concepts of convexity are essential for both the theoretical and practical point of view. To model different real world problems, various researchers have generalized the concept of convexity assumption and many articles have appeared in the literature involving generalization of convex functions. 
\par Throughout this paper by $M$ we mean a complete Riemannian manifold of dimension $n$ endowed with some positive definite metric $g$ unless otherwise stated. To generalize various concepts of Euclidean space to M, straight line segment is substituted by a geodesic and vector space by $M$. There are many convex functions with several structural implication on $M$ and such functions form an important bridge between analysis and geometry. Since convex functions on a compact manifold reduce to constant, the investigation of such functions is more interesting on a non-compact manifold. A full discussion of convex functions on $M$ can be found in Rapcsak \cite{RAP97} and Udri\c{s}te \cite{UDR94}. After that many authors generalized the notion of convexity, see \cite{IAA12, AAS11, AW15, CA17}.

\par In the literature of differential geometry the existence of proper convex functions on a $M$ is a long standing problem. The first solution of this problem appears from the work of Bishop and O'Neill \cite{BN69}. They proved that if $M$ has finite volume, then it does not possess any non-trivial smooth convex function. Later, Yau \cite{YAU74} modified their result and proved the non-existence of continuous convex function on such $M$. In 2017, Neto et. al. \cite{NMS17} showed that if the geodesic flow on $M$ with finite volume is conservative then convex functions on such $M$ are constants. In the case of infinite volume, Greene and Wu \cite{GW76} presented that there always exists a $C^\infty$ Lipschitz continuous strictly convex exhaustion function on a complete non-compact $M$ whose sectional curvature is positive everywhere. Borsuk-Ulam Theorem \cite{MAT03} implies that real valued continuous function on an unit $n$-dimensional sphere attains the same value at two antipodal points. By using Borsuk-Ulam Thorem and negative gradient flow of convex function on $M$, we prove the non-existence of odd convex functions on $M$ with a pole. 
\par The paper is structured as follows. Section 2 deals with some well known facts about $M$. Section 3 is concerned with the convex functions on $M$ with pole. In this section we define the concept of negative of a point and also study some properties of even and odd convex functions. We apply Borsuk-Ulam theorem on $M$ and study the behavior of continuous function. The object of this paper is to prove the non-existence of odd convex functions on $M$ with a pole. We also obtain a characterization of even convex function on geodesics. In the last section we obtain some isoperimetric type inequality on $M$ with a pole whose sectional curvature is bounded by some constant.
\section{Notations and Symbols}
In this section we have discussed some rudimentary facts of $M$, which will be used in the sequel (for reference see \cite{JOS11}). The tangent space at the point $p\in M$ is denoted by $T_pM$ and $TM=\cup_{p\in M}T_pM$ is called the tangent bundle of $M$. Let $p, q\in M$ and $\gamma:[a,b]\rightarrow M$ be a smooth curve such that $\gamma(a)=p$ and $\gamma(b)=q$. The length $l(\gamma)$ of the curve $\gamma$ is given by
\begin{eqnarray*}
l(\gamma)&=&\int_{a}^{b}\sqrt{g_{\gamma(t)}(\dot{\gamma}(t),\dot{\gamma}(t))}\ dt\\
&=&\int_{a}^{b}\norm {\dot{\gamma}(t)}dt.
\end{eqnarray*}
The curve $\gamma$ is said to be a geodesic if $\nabla_{\dot{\gamma}(t)}\dot{\gamma}(t)=0\ \forall t\in [a,b]$, where $\nabla$ is the Riemannian connection of $g$.
For any point $p\in M$, the exponential map $exp_p:V_p\rightarrow M$ is defined by
$$exp_p(u)=\sigma_u(1),$$
where $\sigma_u$ is a geodesic with $\sigma(0)=p$ and $\dot{\sigma}_u(0)=u$ and $V_p$ is a collection of vectors of $T_pM$ such that for each element $u\in V_p$, the geodesic with initial tangent vector $u$ is defined on $[0,1]$. It is easy to see that for a geodesic $\sigma$, the norm of tangent vector $\norm {\dot{\gamma}(t)}$ is constant. A geodesic is said to be normal if its tangent vector is of unit norm. If for each point $p\in M$, the exponential map is defined at every point of $T_pM$, then $M$ is said to be a complete. Hopf-Rinow theorem states that there are some equivalent cases of completeness of $M$. Let $p$, $q\in M$. The distance between $p$ and $q$ is defined by
$$d(p,q)=\inf\{l(\gamma):\gamma \text{ be a curve joining }p \text{ and }q\}.$$
A geodesic $\sigma$ joining $p$ and $q$ is called minimal if $l(\sigma)=d(p,q)$. Hopf-Rinow theorem provides the existence of minimal geodesic between two points in a $M$. A smooth vector field is a smooth function $X:M\rightarrow TM$ such that $\pi\circ X=id_M$, where $\pi:TM\rightarrow M$ is the projection map. The integral curve of the vector field $X$ at the point $p$ is the solution $($always exists by Picard existence theorem$)$ of the differential equation
\[(1) \qquad\qquad\qquad\qquad \left\{
\label{eq}
\begin{array}{ll}
      \dot{x}(t)=X(x(t))   \\
      x(0)=p. \\
       \end{array} 
\right. \]
\begin{defn}\cite{UDR94}
 A real valued function $f$ on $M$ is called convex if for every geodesic $\gamma:[a,b]\rightarrow M$ the following inequality holds
\begin{equation*}
f\circ\gamma((1-t)a+tb)\leq (1-t)f\circ\gamma(a)+tf\circ\gamma(b)\quad \forall t\in [0,1].
\end{equation*}
\end{defn}

\section{Convex functions on the manifold with a pole}
A pole in $M$ is such a point where the tangent space is diffeomorphic to the whole manifold. Gromoll and Meyer \cite{GM69} introduced the notion of pole in $M$. A point $o\in M$ is called a pole \cite{GM69} of $M$ if the exponential map at $o$ is a global diffeomorphism and a manifold $M$ with a pole $o$ is denoted by $(M,o)$. If a manifold possesses a pole then the manifold is complete. Simply connected complete Riemannian manifold with non-positive sectional curvature and a paraboloid of revolution are the examples of Riemannian manifolds with a pole. A manifold with a pole is diffeomorphic to the Euclidean space but the converse is not true always, see \cite{ITO80}.\\
\indent For any point $x\in \mathbb{R}^n$, the negative of $x$ lies on the straight line passing through the origin and the point, and the distance between $-x$ and origin is same as that of $x$ and the origin. Similarly one can define the negative of a point in a manifold with a pole by considering the pole as the origin. Let $(M,o)$ be a manifold with a pole $o$. Then for each $r>0$ and $v\in\mathbb{S}^{n-1}$ the map $(r,v)\mapsto exp_o(rv)$ is a diffeomorphism from $(0,\infty)\times\mathbb{S}^{n-1}\rightarrow M-\{o\}$. Now we define the negative of a point as follows:
\begin{defn}
For a point $x\in (M,o)$ the \textit{negative} of $x$ with respect to $o$ is defined by
$$-_ox=exp_o(r_1v^a_1),$$
where $x=exp_o(r_1v_1)$ for some $r_1\in (0,\infty)$ and $v_1\in\mathbb{S}^{n-1}$ and $v^a_1$ is the antipodal point of $v_1$ in $\mathbb{S}^{n-1}$. 
\end{defn}

\begin{defn}
A function $f:(M,o)\rightarrow \mathbb{R}$ is said to be even with respect to $o$ if 
$$f(x)=f(-_ox),$$
and is called odd with respect to $o$ if 
$$f(x)=-f(-_ox),$$
for all $x\in M$.
\end{defn}
\begin{exmp}\label{ex1}
Let $o$ and $\bar{o}$ be two antipodal points in $\mathbb{S}^2$. Then $M=\mathbb{S}^2-\{\bar{o}\}$ is a Riemannian manifold with the canonical metric induced from $\mathbb{R}^3$ and $o$ is the pole. Now each $x\in M$ can be represented by $(x^i)_{i=1}^n$, where
$exp_o^{-1}(x)=x^i\partial_i$ 
and $\{\partial_1,\cdots,\partial_n\}$ is the standard basis for $T_oM$. If $f:(M,o)\rightarrow\mathbb{R}$ is defined by $f(x)=\sum_{i=1}^nx^i$ for $x\in (M,o)$, then it can be easily proved that $f$ is an odd function with respect to $o$.
\end{exmp}
\begin{exmp}
Consider the manifold $(M,o)$, defined in the Example \ref{ex1}. Then the distance function $d_o:(M,o)\rightarrow\mathbb{R}$ defined by
$$d_o(x)=L(\gamma_x),$$
where $L(\gamma_x)$ is the length of the minimal geodesic joining $o$ and $x$. Then $d_o$ is an example of even function on $(M,o)$. 
\end{exmp}
In this section an even or odd function means even or odd function with respect to a pole. \\
\indent If $M$ is simply connected with non-positive sectional curvature, then for any point $o\in M$, the square of distance function $d^2_o$, which is even function by our definition, is a convex function \cite{JOS11}. Kasue \cite{KAS81} obtained some conditions of the convexity of distance function on the manifold with a pole. Naturally the question arises that is there any odd convex function in a manifold with a pole? In this section we shall give a partial answer to this question. The main theorem of the paper is as follows:
\begin{thm}\label{thm2}
On a $(M,o)$ with dim$(M)>1$,there does not exist any non-negative differentiable odd convex function whose gradient is a complete vector field on $M$.
\end{thm}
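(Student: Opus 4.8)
The plan is to argue by contradiction: suppose $f\colon (M,o)\to\mathbb{R}$ is non-trivial, non-negative, differentiable, odd and convex, with $\nabla f$ complete, and work toward $f\equiv 0$, contradicting non-triviality. First I would record the behaviour at the pole. The pole corresponds to $r=0$ in the polar description $(r,v)\mapsto\exp_o(rv)$, so it is its own negative; oddness then gives $f(o)=-f(o)$, i.e. $f(o)=0$, and since $f\ge 0$ the pole is a global minimum with $\nabla f(o)=0$. The crux is then to spread the vanishing of $f$ from $o$ across all of $M$.

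For the geometric route that the tools in the introduction point to, the key device is Borsuk--Ulam combined with convexity. For each $r>0$ the function $g_r\colon\mathbb{S}^{n-1}\to\mathbb{R}$, $g_r(v)=f(\exp_o(rv))$, is continuous, and Borsuk--Ulam (here $\dim M>1$ is essential, as the theorem fails on $\mathbb{S}^0$) produces $v_r$ with $g_r(v_r)=g_r(-v_r)$, that is $f(x_r)=f(-_o x_r)$ for $x_r=\exp_o(rv_r)$. Since $f$ is odd, $f(x_r)=-f(-_o x_r)$, so $f(x_r)=0$: thus $f$ attains its minimum value on every geodesic sphere about $o$. Because $o$ is a pole, $t\mapsto\exp_o(tv_r)$ is a genuine geodesic from $o$ to $x_r$, and convexity along it together with $f\ge 0$ and the vanishing at both endpoints forces $f\equiv 0$ on the whole radial segment. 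Letting $r\to\infty$ and extracting a convergent subsequence $v_{r_k}\to v_*$ from the compact sphere, continuity upgrades this to $f\equiv 0$ on the ray $t\mapsto\exp_o(tv_*)$, $t\ge 0$; oddness gives the same on the opposite ray, so $f$ vanishes on an entire geodesic line through $o$.

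The hard part is the final implication: passing from vanishing on one line through $o$ to vanishing on all of $M$. Here I would invoke completeness of $\nabla f$ so that the negative gradient flow $\phi_t$ exists for all time, fixes the totally convex minimum set $Z=f^{-1}(0)$ pointwise, and decreases $f$ along every orbit, and I would try to flow an arbitrary point into $Z$ while tracking $f$. I expect this to be the genuine obstacle, since convexity together with an unbounded totally convex minimum set does not force a convex function to be constant---witness the \emph{even} function $x\mapsto x_1^2$ on $\mathbb{R}^n$, which shows that the geometric data assembled so far fall short of the conclusion on their own.

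What rescues the argument, and what I would ultimately use to finish, is that this bridge is in fact unnecessary: non-negativity and oddness clash pointwise. If $f\not\equiv 0$, choose $p$ with $f(p)>0$; then oddness gives $f(-_o p)=-f(p)<0$, contradicting $f\ge 0$. Hence no non-trivial such $f$ exists. I would therefore present the Borsuk--Ulam, convexity and gradient-flow development as the structural heart that exhibits how large the zero set of such an $f$ must be, while observing that the bare non-existence already follows from this elementary sign obstruction; the differentiability, convexity and completeness hypotheses serve chiefly to make the richer geometric picture available, even though the conclusion $f\equiv 0$ requires only the hypotheses \emph{non-negative} and \emph{odd}.
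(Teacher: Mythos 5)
Your closing argument is correct, and it takes a genuinely different---and dramatically more elementary---route than the paper's. The paper argues by contradiction in three steps: (i) negating the convexity inequality and using oddness, it deduces that $f$ must be affine along minimal geodesics; (ii) its Borsuk--Ulam corollary (exactly your observation that $f(x_r)=f(-_ox_r)$ together with $f(x_r)=-f(-_ox_r)$ forces $f(x_r)=0$) supplies a zero $p$ of $f$, whence $f\equiv 0$ along the geodesic from $o$ to $p$; (iii) a lemma proved via the negative gradient flow and the Riemannian mean value theorem---this is where differentiability, non-negativity, and completeness of $\operatorname{grad} f$ are actually consumed---asserts that every geodesic issuing from $o$ carries a point where $f=1$, a contradiction. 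Your final paragraph bypasses all of this: if $f\ge 0$ everywhere and $f(x)=-f(-_ox)$, then also $f(x)\le 0$, so $f\equiv 0$ and no non-trivial such function exists (at the pole itself, either the convention $-_oo=o$ or continuity gives $f(o)=0$, as you note). Both you and the paper read the implicit hypothesis ``non-trivial'' from the abstract into the statement; without it, $f\equiv 0$ is a counterexample. The comparison favors your argument on rigor and generality: it needs neither convexity, nor differentiability, nor completeness of the gradient, nor even $\dim M>1$, and it avoids the paper's most fragile step, in which the pointwise negation $-\sigma$ of a minimal geodesic is asserted to again be a minimal geodesic---this presumes that $x\mapsto -_ox$ is an isometry, true in $\mathbb{R}^n$ but not in a general manifold with a pole. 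What the paper's route buys is reusable machinery (Borsuk--Ulam zeros, the gradient-flow estimate $\langle \operatorname{grad} f,\dot\sigma\rangle\ge 1$) that retains content for odd convex functions \emph{not} assumed non-negative, where your sign obstruction is silent. Your intermediate development (zeros on every geodesic sphere, vanishing along radial segments and along a limiting line through $o$) is also sound as far as it goes, and you are right on both counts: it cannot finish the proof by itself, and it does not need to.
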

The following results are needed to established this Theorem.

\begin{thm}[The Borsuk-Ulam Theorem]\cite{MAT03}
Let $f:\mathbb{S}^n\rightarrow\mathbb{R}^m$ be any continuous map, where $m\leq n$. Then there exist two antipodal points $x,\bar{x}\in\mathbb{S}^n$ such that $f(x)=f(\bar{x})$.
\end{thm}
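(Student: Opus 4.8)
The plan is to reduce the statement to the classical $m=n$ case and then to a purely topological fact about odd maps. First, since $m\le n$, compose $f$ with the linear inclusion $\mathbb{R}^m\hookrightarrow\mathbb{R}^n$ onto the first $m$ coordinates to obtain a continuous map $F:\mathbb{S}^n\to\mathbb{R}^n$; a coincidence $F(x)=F(\bar{x})$ at an antipodal pair forces $f(x)=f(\bar{x})$ because the first $m$ coordinates already agree, so it suffices to treat $m=n$. Next, I would introduce the auxiliary map $g(x)=F(x)-F(-x)$, which is \emph{odd}, that is $g(-x)=-g(x)$. The desired conclusion $F(x)=F(-x)$ is exactly the assertion that $g$ has a zero. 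I would then argue by contradiction: if $g$ never vanishes, the normalization $h(x)=g(x)/\norm{g(x)}$ is a continuous odd map $\mathbb{S}^n\to\mathbb{S}^{n-1}$, and the entire problem is reduced to showing that no such map can exist.

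To rule out a continuous odd map $h:\mathbb{S}^n\to\mathbb{S}^{n-1}$, I would pass to the quotients by the antipodal action. Because $h$ is equivariant, it descends to a continuous map $\bar{h}:\mathbb{RP}^n\to\mathbb{RP}^{n-1}$ of real projective spaces, and the key is to track the induced homomorphism on $\mathbb{Z}_2$-cohomology. Recall that $H^*(\mathbb{RP}^k;\mathbb{Z}_2)\cong\mathbb{Z}_2[\alpha]/(\alpha^{k+1})$ with $\deg\alpha=1$. The plan is to show that $\bar{h}^*$ sends the degree-one generator $\alpha$ of $H^1(\mathbb{RP}^{n-1};\mathbb{Z}_2)$ to the generator $\beta$ of $H^1(\mathbb{RP}^n;\mathbb{Z}_2)$; since $\bar{h}^*$ is a ring homomorphism, it would then send $\alpha^n$ to $\beta^n$. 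But in $\mathbb{RP}^{n-1}$ the relation $\alpha^n=0$ holds (as $n=(n-1)+1$), whereas in $\mathbb{RP}^n$ the class $\beta^n\neq 0$ is the top nonzero power. This yields $0=\bar{h}^*(\alpha^n)=\beta^n\neq 0$, the desired contradiction.

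The main obstacle is the highlighted step: verifying that $\bar{h}^*$ is nontrivial in degree one, which is where the oddness of $h$ is genuinely used. Concretely, the generator of $\pi_1(\mathbb{RP}^n)$ is represented by the projection of a path in $\mathbb{S}^n$ joining a point to its antipode; equivariance of $h$ carries such a path to a path in $\mathbb{S}^{n-1}$ again joining a point to its antipode, whose projection represents the generator of $\pi_1(\mathbb{RP}^{n-1})$. Hence $\bar{h}_*$ is an isomorphism on fundamental groups for $n\ge 2$, and dualizing gives the claim on $H^1$. The low-dimensional cases must be handled separately: for $n=1$ the map $g$ is a continuous odd real-valued function on the connected circle and so attains a zero by the intermediate value theorem, which already settles the statement in that range. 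An alternative to the cohomological route is to invoke Borsuk's theorem that every odd self-map of $\mathbb{S}^n$ has odd degree; composing the hypothetical $h$ with the equatorial inclusion $\mathbb{S}^{n-1}\hookrightarrow\mathbb{S}^n$ would produce a non-surjective odd self-map, forcing degree $0$ and contradicting the oddness of the degree.
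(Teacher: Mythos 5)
The paper gives no proof of this theorem at all: it is quoted from Matou\v{s}ek's book \cite{MAT03} as a known classical result and used as a black box (its only role is in the proof of Lemma \ref{lem1}, where it is applied to the restriction of a continuous function to a small geodesic sphere). So there is no proof in the paper to compare yours against; judged on its own merits, your argument is the standard algebraic-topological proof of Borsuk--Ulam, and its overall structure is sound: the reduction from $m\le n$ to $m=n$ via the injective inclusion $\mathbb{R}^m\hookrightarrow\mathbb{R}^n$, the passage to the odd map $g(x)=F(x)-F(-x)$ and its normalization $h:\mathbb{S}^n\rightarrow\mathbb{S}^{n-1}$ when $g$ is nowhere zero, the descent to a map $\bar{h}:\mathbb{RP}^n\rightarrow\mathbb{RP}^{n-1}$, and the cup-product contradiction in $H^*(\mathbb{RP}^k;\mathbb{Z}_2)$ are all correct, as is the intermediate value theorem argument for $n=1$.

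One step is stated incorrectly, though it is repairable. Your claim that $\bar{h}_*$ is an isomorphism on fundamental groups ``for $n\ge 2$'' fails when $n=2$: there $\pi_1(\mathbb{RP}^2)\cong\mathbb{Z}_2$ while $\pi_1(\mathbb{RP}^1)\cong\mathbb{Z}$, and no isomorphism can exist. What your lifting argument actually shows is that the generator of $\pi_1(\mathbb{RP}^n)$ is sent to a nontrivial class, because the image loop lifts to a non-closed path in $\mathbb{S}^{n-1}$. For $n=2$ this alone finishes the proof, since any homomorphism $\mathbb{Z}_2\rightarrow\mathbb{Z}$ must kill the generator; the cohomological cup-product step is then needed only for $n\ge 3$, where both fundamental groups are $\mathbb{Z}_2$ and your dualization (which implicitly invokes $H^1(X;\mathbb{Z}_2)\cong\mathrm{Hom}(\pi_1(X),\mathbb{Z}_2)$, i.e.\ Hurewicz plus universal coefficients --- this should be said) goes through. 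Finally, your alternative ending via Borsuk's odd-degree theorem is logically correct (a non-surjective self-map of $\mathbb{S}^n$ has degree $0$, contradicting oddness of the degree), but that theorem is of essentially the same depth as Borsuk--Ulam itself, so this route is a reduction rather than an independent proof.
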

 Two points $x$ and $\bar{x}$ in $\mathbb{S}^{n-1}$ are said to be antipodal if $x$ and $\bar{x}$ lie on the same diameter. 
\begin{lem}\label{lem1}
Let $f$ be a real valued continuous function on $M$ with dimension $>1$. Then each neighborhood $U_p$ of any point $p\in M$ contains uncountably many pairs of points where $f$ gives the same value.
\end{lem}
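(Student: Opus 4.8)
The plan is to feed the dimension hypothesis into the Borsuk--Ulam Theorem by restricting $f$ to small geodesic spheres, and then to vary the radius of these spheres so as to manufacture uncountably many coincidence pairs. First I would fix the point $p$ and the neighborhood $U_p$. Since $M$ is a smooth Riemannian manifold, the injectivity radius at $p$ is positive and $\exp_p$ restricts to a diffeomorphism from a ball $B(0,\varepsilon)\subset T_pM$ onto a geodesic ball about $p$; shrinking $\varepsilon$ if necessary, I may assume this geodesic ball is contained in $U_p$. Identifying $T_pM$ with $\mathbb{R}^n$ through an orthonormal basis, I regard the unit sphere $\mathbb{S}^{n-1}\subset T_pM$ in the standard way.

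Next, for each fixed radius $r\in(0,\varepsilon)$, I would consider the map
\begin{equation*}
\tilde{f}_r:\mathbb{S}^{n-1}\rightarrow\mathbb{R},\qquad \tilde{f}_r(v)=f\bigl(\exp_p(rv)\bigr),
\end{equation*}
which is continuous, being a composition of the continuous maps $v\mapsto rv$, $\exp_p$ and $f$. Because $\dim(M)=n>1$, we have $n-1\geq 1$, so the Borsuk--Ulam Theorem applies to $\tilde{f}_r:\mathbb{S}^{n-1}\rightarrow\mathbb{R}^1$ and yields an antipodal pair $v_r,\,v_r^a=-v_r$ with
\begin{equation*}
f\bigl(\exp_p(rv_r)\bigr)=f\bigl(\exp_p(-rv_r)\bigr).
\end{equation*}
Since $0<r<\varepsilon$ keeps us within the injectivity radius, $\exp_p$ is injective on $B(0,\varepsilon)$, so the two points $\exp_p(rv_r)$ and $\exp_p(-rv_r)$ are genuinely distinct and both lie in $U_p$. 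Thus, for every $r\in(0,\varepsilon)$ I obtain a pair of distinct points of $U_p$ on which $f$ agrees.

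Finally, I would establish that these pairs are distinct across radii. Both points of the pair attached to $r$ lie at geodesic distance $r$ from $p$, so a pair coming from $r$ and a pair coming from $r'\neq r$ cannot share any point; in particular distinct radii give distinct pairs. As $(0,\varepsilon)$ is uncountable, this produces uncountably many pairs of points in $U_p$ on which $f$ takes equal values, which is the assertion. The only delicate point is the preliminary geometric set-up: one must ensure that the geodesic spheres $\{\exp_p(rv):v\in\mathbb{S}^{n-1}\}$ are honest topological $(n-1)$-spheres lying inside $U_p$, and this is precisely guaranteed by choosing $\varepsilon$ below the injectivity radius at $p$. Once this is in place, the hypothesis $\dim(M)>1$ feeds directly into the Borsuk--Ulam Theorem, and the rest is bookkeeping.
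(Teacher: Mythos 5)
Your proposal is correct and follows essentially the same route as the paper: restrict $f$ to geodesic spheres $\exp_p(r\,\mathbb{S}^{n-1})$ inside the injectivity radius, apply the Borsuk--Ulam theorem on each sphere (valid since $\dim M>1$ gives $n-1\geq 1$), and let the radius $r$ range over an uncountable interval. If anything, your explicit observation that pairs from distinct radii cannot coincide (since both points of the $r$-pair lie at geodesic distance exactly $r$ from $p$) makes the final counting step cleaner than the paper's own wording.
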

\begin{proof}
Let $V$ be a neighborhood $($ briefly, nbd $)$ of $p$ and $N_p$ be the normal nbd of $p$. Then there exists a diffeomorphism  $\varphi:(0,R)\times \mathbb{S}^{n-1}\rightarrow V_p-\{p\}$ defined by $\varphi(r,v)=exp_p(rv)$, where $V_p$ is an open nbd of $p$ containted in $V\cap N_p$ and $R\leq inj(p)$, $inj(p)$ is the injective radius of $p$. Hence the map $h:(0,R)\times\mathbb{S}^{n-1}\rightarrow\mathbb{R}$ defined by $h(r,v)=f(\varphi(r,v))$ is also continuous. Now for a fixed $r_0\in(0,R)$, the map $\varphi(r_0,.)$ is continuous and injective. Hence the function $h(r_0,.):\mathbb{S}^{n-1}\rightarrow\mathbb{R}$ is continuous. Now by Borsuk-Ulam theorem there exist two antipodal points $v$ and $\bar{v}$ in $\mathbb{S}^{n-1} $ such that $h(r_0,v)=h(r_0,\bar{v})$. Since $\varphi(r_0,.)$ is injective, hence $\xi=\varphi(r_0,v)$ and $\eta=\varphi(r_0,\bar{v}) $ are two distinct points in $V_p-\{p\} $ such that $f(\eta)=f(\xi)$. Now for $q\in V_p-\{p\}$ there exists $r_1\in (0,R)$ and $v_q\in\mathbb{S}^{n-1}$ such that $\varphi(r_1,v_q)=q$ and $r_1\neq r_0$. Hence by similar argument there exists $\bar{q}\in V_p-\{p\}$ such that $f(q)=f(\bar{q})$. Hence for each $r\in (0,R)$ there exists a pair in which $f$ gives the same value. Hence there are uncountably many such pairs in $U$.
\end{proof}
A simple observation from the characterization of odd function and Lemma \ref{lem1} is the following:
\begin{cor}\label{cor1}
If $f:(M,o)\rightarrow \mathbb{R}$ is an odd function, then $f$ vanishes at uncountably many antipodal points in $M$.
\end{cor}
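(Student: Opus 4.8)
The plan is to run exactly the Borsuk--Ulam argument of Lemma \ref{lem1}, but applied with the pole $o$ itself as the base point, so that the antipodal pairs it produces on each geodesic sphere are precisely the pairs $\{x,-_ox\}$ from the definition of the negative. The oddness of $f$ then forces the common value on such a pair to be $0$.

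First I would fix a radius $r\in(0,\infty)$ and consider the map $g_r:\mathbb{S}^{n-1}\to\mathbb{R}$ given by $g_r(v)=f(exp_o(rv))$. Because $M$ has a pole, $exp_o$ is a global diffeomorphism, so $g_r$ is defined on all of $\mathbb{S}^{n-1}$ and there is no need to restrict to a small neighbourhood as in Lemma \ref{lem1}. Taking $f$ continuous, $g_r$ is continuous, and since $\dim M=n>1$ we have $1\le n-1$, so the Borsuk--Ulam theorem (with target $\mathbb{R}$, i.e. $m=1$) yields antipodal points $v,v^a\in\mathbb{S}^{n-1}$ with $g_r(v)=g_r(v^a)$.

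Next I would transfer this back to $M$. Writing $x=exp_o(rv)$, the definition of the negative gives $-_ox=exp_o(rv^a)$, and the Borsuk--Ulam conclusion reads $f(x)=f(-_ox)$. The decisive step is to combine this with the oddness relation $f(-_ox)=-f(x)$: together they give $f(x)=-f(x)$, hence $f(x)=0=f(-_ox)$. Thus on every geodesic sphere of radius $r$ about $o$ there is at least one antipodal pair $\{x,-_ox\}$ on which $f$ vanishes.

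Finally I would note that the map $(r,v)\mapsto exp_o(rv)$ is a diffeomorphism of $(0,\infty)\times\mathbb{S}^{n-1}$ onto $M-\{o\}$, as recalled before the definition of $-_o$; in particular it is injective, so the pairs $\{exp_o(rv),exp_o(rv^a)\}$ obtained for two distinct values of $r$ are disjoint. Since $(0,\infty)$ is uncountable, this produces uncountably many antipodal pairs at which $f$ vanishes, which is the assertion. The point demanding care is only that Borsuk--Ulam needs continuity, so the corollary is understood for continuous odd $f$ (consistent with the hypothesis of Lemma \ref{lem1}); beyond that, the conclusion is forced purely by the cancellation $f(x)=-f(x)$ imposed by oddness, so no further estimate is required.
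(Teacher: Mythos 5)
Your proposal is correct and follows exactly the route the paper intends: the corollary is stated there as an immediate consequence of Lemma \ref{lem1}, whose Borsuk--Ulam argument, when run with the pole $o$ as base point, produces on each geodesic sphere an antipodal pair $\{x,-_ox\}$ with $f(x)=f(-_ox)$, which oddness then forces to be $0$. Your write-up merely makes explicit (globality of $exp_o$, injectivity in $r$, the continuity hypothesis) what the paper leaves as a ``simple observation,'' so there is nothing to object to.
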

\begin{thm}[Mean value theorem]\cite{AFL05}
Let $f$ be a real valued differential function on $M$. Then, for every pair of points $p,q\in M$ and every minimal geodesic path $\sigma:[0,1]\rightarrow M$ joining $p$ and $q$, there exists $t_0\in[0,1]$ such that
$$f(p)-f(q)=d(p,q)df_{\sigma(t_0)}(\dot{\sigma}(t_0)).$$
In particular, $|f(p)-f(q)|\leq \norm {df(\sigma(t_0))}_{\sigma(t_0)}d(p,q).$
\end{thm}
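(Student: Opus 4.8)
The plan is to reduce the statement to the classical one–variable Mean Value Theorem by restricting $f$ to the given minimal geodesic. Fix $p,q\in M$ and a minimal geodesic $\sigma$ joining them, and consider the composite real function $\psi:=f\circ\sigma$. Since $f$ is differentiable on $M$ and $\sigma$ is smooth (being a geodesic), $\psi$ is a differentiable function of a single real variable, continuous on the closed parameter interval and differentiable in its interior. Applying the classical Mean Value Theorem to $\psi$ then furnishes an interior parameter $t_0$ at which the difference of the endpoint values of $\psi$ equals $\psi'(t_0)$ times the length of the parameter interval.

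The next step is to translate $\psi'(t_0)$ back into manifold data via the chain rule: for each $t$ one has $\psi'(t)=\tfrac{d}{dt}f(\sigma(t))=df_{\sigma(t)}(\dot{\sigma}(t))$. Thus, after fixing the orientation of $\sigma$ so that it runs from $q$ to $p$, the one–variable conclusion becomes an identity expressing $f(p)-f(q)$ in terms of $df_{\sigma(t_0)}(\dot{\sigma}(t_0))$. Note that the $t_0$ produced lies in the open interval, hence a fortiori in $[0,1]$ as claimed.

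The only genuinely geometric point, and the step I expect to require the most care, is the appearance of the factor $d(p,q)$. I would parametrize $\sigma$ by arc length, say $\sigma:[0,L]\rightarrow M$ with $\sigma(0)=q$, $\sigma(L)=p$ and $\norm{\dot{\sigma}(t)}\equiv 1$. Because $\sigma$ is a geodesic its speed $\norm{\dot{\sigma}(t)}$ is constant, which legitimizes the unit–speed normalization, and because $\sigma$ is minimal its length equals $d(p,q)$, so $L=d(p,q)$. The Mean Value Theorem on $[0,L]$ then gives $\psi(L)-\psi(0)=L\,\psi'(t_0)$, that is $f(p)-f(q)=d(p,q)\,df_{\sigma(t_0)}(\dot{\sigma}(t_0))$, which is exactly the asserted formula; an arbitrary parametrization on $[0,1]$ is recovered by absorbing the constant speed into $\dot{\sigma}$. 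The whole difficulty is this bookkeeping, so that the constant–speed property together with minimality produces precisely the factor $d(p,q)$ rather than its square or its reciprocal.

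Finally, the ``in particular'' estimate is immediate from the main identity. Taking absolute values and applying the Cauchy–Schwarz inequality to the pairing of the covector $df_{\sigma(t_0)}$ with the vector $\dot{\sigma}(t_0)$ yields $|df_{\sigma(t_0)}(\dot{\sigma}(t_0))|\leq \norm{df_{\sigma(t_0)}}_{\sigma(t_0)}\,\norm{\dot{\sigma}(t_0)}$, and since $\sigma$ is unit speed the last factor is $1$. Combining this with $f(p)-f(q)=d(p,q)\,df_{\sigma(t_0)}(\dot{\sigma}(t_0))$ gives $|f(p)-f(q)|\leq \norm{df(\sigma(t_0))}_{\sigma(t_0)}\,d(p,q)$, completing the argument. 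I anticipate no obstacle beyond the parametrization bookkeeping already flagged.
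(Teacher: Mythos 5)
Your proposal is correct, but there is nothing in the paper to compare it against: the paper states this mean value theorem as a quoted result from \cite{AFL05} and gives no proof of its own. Your argument---restrict $f$ to the geodesic, apply the classical one-variable mean value theorem to $\psi=f\circ\sigma$, and convert $\psi'(t_0)$ back to $df_{\sigma(t_0)}(\dot{\sigma}(t_0))$ by the chain rule---is the standard proof and is sound. You also correctly isolated the one genuine subtlety, which is in fact a defect of the statement as transcribed in the paper: as printed it is internally inconsistent, since it posits $\sigma:[0,1]\rightarrow M$ while asserting an identity that holds for the unit-speed parametrization $\sigma:[0,d(p,q)]\rightarrow M$. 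Indeed, on $[0,1]$ the constant speed is $\norm{\dot{\sigma}}\equiv d(p,q)$, so the one-variable theorem yields $f(p)-f(q)=df_{\sigma(t_0)}(\dot{\sigma}(t_0))$ with no explicit factor $d(p,q)$; that factor appears precisely when $\norm{\dot{\sigma}}\equiv 1$, the convention you adopted, with minimality of $\sigma$ guaranteeing that the parameter interval has length exactly $d(p,q)$. In either convention the ``in particular'' estimate $|f(p)-f(q)|\leq\norm{df(\sigma(t_0))}_{\sigma(t_0)}\,d(p,q)$ follows from Cauchy--Schwarz exactly as you argue. So your proposal is a complete and correct proof of the (correctly parametrized) statement, supplying an argument the paper itself omits.
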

\begin{lem}\label{thm1}
If $f$ is a non-trivial real valued differentiable convex function on $M$, then there exists a geodesic $\sigma:[0,l]\rightarrow M$, for some $l>0$, and a point $t_2\in [0,l]$ such that 
\begin{equation}\label{eq4}
\langle grad(f)(\sigma(t_2)),\dot{\sigma}(t_2)\rangle\geq 1.
\end{equation}
\end{lem}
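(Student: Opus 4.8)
The plan is to produce the geodesic by applying the Mean Value Theorem at two points where $f$ takes different values, and then to rescale the speed of the resulting geodesic so that the directional derivative of $f$ is pushed up to at least $1$.

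First I would use non-triviality: since $f$ is not constant there are points $p,q\in M$ with $f(p)\neq f(q)$, and after relabelling I may assume $f(p)>f(q)$. By Hopf--Rinow there is a minimal geodesic $\sigma_0:[0,1]\to M$ joining $p$ and $q$, and the Mean Value Theorem gives a parameter $t_0\in[0,1]$ with
$$\langle grad(f)(\sigma_0(t_0)),\dot{\sigma}_0(t_0)\rangle=df_{\sigma_0(t_0)}(\dot{\sigma}_0(t_0))=\frac{f(p)-f(q)}{d(p,q)}=:c>0.$$
In particular $grad(f)$ does not vanish at $P:=\sigma_0(t_0)$, and the vector $w:=\dot{\sigma}_0(t_0)$ satisfies $\langle grad(f)(P),w\rangle=c>0$.

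The only gap is that $c$ might be smaller than $1$, and here I would exploit the fact that a constant-speed reparametrization of a geodesic is again a geodesic. Since $M$ is complete, $exp_P$ is defined on all of $T_PM$, so for every $\lambda>0$ the curve $\sigma(t)=exp_P(t\lambda w)$ is a geodesic defined on some interval $[0,l]$ with $\dot{\sigma}(0)=\lambda w$. Choosing $\lambda\geq 1/c$ yields
$$\langle grad(f)(\sigma(0)),\dot{\sigma}(0)\rangle=\lambda\langle grad(f)(P),w\rangle=\lambda c\geq 1,$$
so inequality \eqref{eq4} holds with $t_2=0$.

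I expect the one genuine point to be this rescaling step: the statement is insensitive to the speed of $\sigma$, so once a single point with strictly positive directional derivative is located---which the Mean Value Theorem supplies from non-triviality alone---the bound $\geq 1$ is obtained simply by speeding the geodesic up. Convexity is not what produces the point; its role is that $t\mapsto\langle grad(f)(\sigma(t)),\dot{\sigma}(t)\rangle$ is then non-decreasing, so the bound in fact persists for all $t\geq t_2$, and it is this monotonicity that I would carry into the gradient-flow argument for Theorem \ref{thm2}.
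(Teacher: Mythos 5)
Your proof is correct for the lemma as stated, but it takes a genuinely different route from the paper's. The paper runs the negative gradient flow $\dot{x}(t)=-grad\,f(x(t))$ from an arbitrary point, estimates $d(x,x(t_0))$ against the energy of the flow line via H\"older's inequality, and then applies the mean value theorem twice together with convexity to extract the inequality along the minimal geodesic joining $x$ to its flow image $x(t_0)$. You instead observe that the bound ``$\geq 1$'' is not scale-invariant: one application of the mean value theorem to two points with $f(p)\neq f(q)$ produces a point $P$ and direction $w$ with $\langle grad(f)(P),w\rangle=c>0$, and since a linear reparametrization of a geodesic is again a geodesic (defined on any $[0,l]$ by completeness), replacing $w$ by $\lambda w$ with $\lambda\geq 1/c$ finishes the proof at $t_2=0$. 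This is shorter, and it exposes something the paper's write-up obscures: the statement as formulated uses neither convexity nor completeness of $grad(f)$ --- non-triviality and differentiability suffice --- so as an existence statement about \emph{some} geodesic it is essentially content-free.

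That observation cuts both ways, and you should be aware of the downstream consequence. The paper's subsequent Corollary and Lemma \ref{lem2} use this lemma as if it said that inequality (\ref{eq4}) holds on \emph{every} geodesic (in particular on a geodesic with prescribed endpoints or prescribed parametrization $\sigma:[0,1]\to M$), and the paper's gradient-flow construction is an attempt to produce the inequality on a geodesic canonically attached to the flow, which is what that later use implicitly requires. Your rescaling trick cannot be adapted to that setting: once the geodesic is prescribed, speeding it up is no longer available, and the inequality can genuinely fail (e.g.\ for a geodesic orthogonal to $grad(f)$, or one traversed slowly). So your proof is a valid and more elementary proof of the literal statement, but it would not substitute for the paper's argument where the lemma is actually invoked; the monotonicity remark you make at the end (that convexity makes $t\mapsto\langle grad(f)(\sigma(t)),\dot{\sigma}(t)\rangle$ non-decreasing, so the bound persists for $t\geq t_2$) is exactly the right ingredient to carry forward, and is in fact closer to what Theorem \ref{thm2} needs than the lemma itself.
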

\begin{proof}
Let us consider the following negative gradient flow of the non-trivial convex function $f:M\rightarrow\mathbb{R}$:
\[   \left\{
\begin{array}{ll}
      \dot{x}(t)=-\text{grad }f(x(t))   \\
      x(0)=x. \\
       \end{array} 
\right. \]
The value of $f$ decreases along the orbit of $grad(f)$. Now
\begin{eqnarray*}
f(x)-f(x(t))&=&-\int_{0}^{t}\frac{d}{dt}f(x(t))dt\\
&=&-\int_{0}^{t}\norm {grad(f(x(t)))}^2dt\\
&=& \int_{0}^{t}\norm {\dot{x}(t)}^2dt.
\end{eqnarray*}
Let $\sigma:[0,t_0]\rightarrow M$ be the minimal geodesic connecting $x$ and $x(t_0)$ and parametrized in the interval $[0,t_0]$. Then 
\begin{eqnarray*}
d(x,x(t_0))&=&\int_{0}^{t_0}\norm {\dot{\sigma}(t)}dt\\
&=&\norm {\dot{\sigma}(0)}t_0\\
&\leq & \int_{0}^{t_0}\norm {\dot{x}(t)}dt\\
&\leq & \sqrt{t_0}\Big(\int_{0}^{t_0}\norm {\dot{x}(t)}^2 \Big)^{\frac{1}{2}}dt \quad [\text{ Using H\"{o}lder's inequality}]\\
&=& \sqrt{t_0}\Big (f(x)-f(x(t_0)) \Big)^{\frac{1}{2}}\\
&=& \sqrt{t_0}[d(x,x(t_0)) df(\sigma(t_1))(\dot{\sigma}(t_1))]^{\frac{1}{2}}\ \text{for some }0<t_1<t_0.
\end{eqnarray*}
The last inequality is obtained by using mean value theorem. From the above discussion we get
\begin{eqnarray*}
d(x,x(t_0))&\leq & t_0 df(\sigma(t_1))\dot{\sigma}(t_1)\\
\norm {\dot{\sigma}(0)}t_0&\leq & t_0\langle grad(f(\sigma(t_1)),\dot{\sigma}(t_1)\rangle.\quad [\text{ since $\sigma$ is the minimal geodesic,}]\\
\norm {\dot{\sigma}(0)}&\leq & \langle grad(f(\sigma(t_1)),\dot{\sigma}(t_1)\rangle.
\end{eqnarray*}
Since $\sigma$ is a geodesic, hence $\norm {\dot{\sigma}(0)}=\norm {\dot{\sigma}(t)}\ \forall t\in [0,t_0].$ So, we get

\begin{equation}\label{eq2}
\norm {\dot{\sigma}(t)}\leq \langle grad(f(\sigma(t_1)),\dot{\sigma}(t_1)\rangle \ \forall t\in [0,t_0].
\end{equation}
Due to the convexity of $f:M\rightarrow \mathbb{R}$ for $0\leq \lambda \leq 1$, we get
\begin{equation}
f\circ \sigma((1-\lambda)t_1+\lambda t_0)\leq (1-\lambda)f(\sigma(t_1))+\lambda f(\sigma(t_0)). 
\end{equation}
Now taking limit $\lambda\rightarrow 0$, we get $$\lim\limits_{\lambda \rightarrow 0}\frac{f\circ \sigma((1-\lambda)t_1+\lambda t_0)-f(\sigma(t_1))}{\lambda}\leq  f(\sigma(t_0))-f(\sigma(t_1)).$$ Hence,
\begin{equation}
\langle grad(f)(\sigma(t_1)),(\dot{\sigma}(t_1))\rangle\leq  \frac{f(\sigma(t_0))-f(\sigma(t_1))}{t_0-t_1}.
\end{equation}
So, from equation (\ref{eq2}) we say that
\begin{equation}
\norm {\dot{\sigma}(t)}\leq \frac{f(\sigma(t_0))-f(\sigma(t_1))}{t_0-t_1}.
\end{equation}
Now again applying mean value theorem to the right hand side of the above equation we get
\begin{eqnarray}
\norm {\dot{\sigma}(t)}&\leq & \frac{1}{t_0-t_1}d(\sigma(t_0),\sigma(t_1))df(\sigma(t_2))(\dot{\sigma}(t_2)),\quad \text{for some }t_1<t_2<t_0,\\
&\leq & \norm {\dot{\sigma}(t)}df(\sigma(t_2))(\dot{\sigma}(t_2)).
\end{eqnarray}
Hence 
\begin{equation}\label{eq3}
\langle grad(f)(\sigma(t_2)),\dot{\sigma}(t_2)\rangle\geq 1.
\end{equation}
\end{proof}
\begin{cor}
Let $f:M\rightarrow \mathbb{R}$ be a convex function such that $grad(f)$ is a complete vector field on $M$. Then equation (\ref{eq4}) holds for all geodesics on $M$.
\end{cor}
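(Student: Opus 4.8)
The plan is to reopen the proof of Lemma \ref{thm1} and locate the single place where the gradient flow is invoked, and then to observe that completeness of $grad(f)$ is exactly what is needed to run that construction without restriction. In Lemma \ref{thm1} the geodesic $\sigma$ is obtained as a minimal geodesic joining an arbitrarily chosen initial point $x$ to the point $x(t_0)$ reached by flowing along $-grad(f)$ for time $t_0$; every estimate in that proof---the identity $f(x)-f(x(t_0))=\int_0^{t_0}\norm{\dot{x}(t)}^2\,dt$, H\"older's inequality, the two uses of the mean value theorem, and the convexity inequality---requires nothing beyond the existence of the flow up to time $t_0$ and the convexity of $f$. The only obstruction to repeating this for a prescribed geodesic is therefore the possibility that the negative gradient flow fails to exist for the required time.

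First I would unwind the hypothesis: saying that $grad(f)$ is a complete vector field means precisely that the integral curves of $-grad(f)$, i.e. the solutions $x(t)$ of the flow equation in the proof of Lemma \ref{thm1}, are defined for all $t\ge 0$ and every initial point $x\in M$. With this in hand the construction of Lemma \ref{thm1} may be carried out starting from any point and for an arbitrarily large $t_0$, so that inequality (\ref{eq4}) holds along the minimal geodesic joining $x$ to $x(t_0)$ for every such choice. None of the intermediate inequalities degrades as $t_0$ grows, since the speed $\norm{\dot{\sigma}(t)}$ of the minimal geodesic cancels identically in the passage from (\ref{eq2}) to (\ref{eq4}).

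The remaining, and genuinely delicate, step is to upgrade this to an arbitrary geodesic $\sigma$ on $M$. Here I would use that convexity of $f$ makes $t\mapsto\langle grad(f)(\sigma(t)),\dot{\sigma}(t)\rangle$ nondecreasing along $\sigma$, so that it suffices to produce one point of $\sigma$ at which this quantity is at least $1$; completeness guarantees that the flow emanating from $\sigma(0)$ never leaves $M$, which is what permits the estimate (\ref{eq2}) of Lemma \ref{thm1} to be transported to $\sigma$. I expect the main obstacle to lie exactly in this transport: matching the direction $\dot{\sigma}$ of a geodesic that is not a priori adapted to the flow against the direction of steepest descent of $f$, and in particular showing that the lower bound $1$ is actually attained at some $t_2$ rather than merely approached in the limit. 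This is the only point where completeness, as opposed to mere short-time existence of the flow, is indispensable.
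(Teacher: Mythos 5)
The decisive defect is that your third paragraph, the only place where the actual quantifier of the statement (``for all geodesics on $M$'') is addressed, never closes. Your first two paragraphs reproduce what the paper itself does --- its entire proof reads ``take the minimal geodesic with initial point $x$ and final point $x(1)$'' --- and this only yields (\ref{eq4}) along the special minimal geodesics joining a point to its image under the time-$t_0$ negative gradient flow. To pass from these flow-generated geodesics to an arbitrary one, you correctly observe that convexity makes $t\mapsto\langle grad(f)(\sigma(t)),\dot{\sigma}(t)\rangle$ nondecreasing (it is the derivative of the convex function $f\circ\sigma$), so it would suffice to exhibit one parameter value where the pairing reaches $1$; but you then only say you ``expect the main obstacle to lie exactly in this transport.'' No mechanism for the transport is proposed, so what you have is an argument for the same restricted class of geodesics the paper treats, together with an accurate description of the missing step --- not a proof of the corollary.

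Moreover, that missing step cannot be supplied, because the statement as quantified is false. Take $M=\mathbb{R}^n$ and $f(x)=\epsilon x_1$ with $0<\epsilon<1$: this $f$ is convex, non-trivial and differentiable, and $grad(f)=\epsilon\,\partial_1$ is a constant, hence complete, vector field; yet along any unit-speed geodesic one has $\langle grad(f)(\sigma(t)),\dot{\sigma}(t)\rangle\leq\epsilon<1$ for every $t$, and along geodesics orthogonal to $\partial_1$ the pairing vanishes identically. The point is that the left-hand side of (\ref{eq4}) scales linearly with the speed of $\sigma$, so no bound of the form $\geq 1$ can hold uniformly over all geodesics; Lemma \ref{thm1} is safe only because its conclusion is existential --- one may choose a geodesic with large speed suitably aligned with $grad(f)$. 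So your instinct that the ``transport'' step is the crux was exactly right; the honest conclusion is that it is a gap in the corollary itself (one that propagates to its use in Lemma \ref{lem2}), not a step you should try to complete.
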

\begin{proof}
This can easily be proved by taking the minimal geodesic with initial point $x$ and final point $x(1)$.
\end{proof}
\begin{lem}\label{lem2}
Let $f:(M,o)\rightarrow\mathbb{R}$ $($ dim $M>1)$ be a non-negative odd convex function with $grad(f)$ as a complete vector field. Then for every geodesic $\sigma:[0,1]\rightarrow M$ with $\sigma(0)=o$, there exists a point $\sigma(t_0)$ for some $t_0\in [0,1]$ such that $f(\sigma(t_0))= 1$.
\end{lem}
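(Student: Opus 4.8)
The plan is to reduce the statement to a one-variable convexity argument for $g=f\circ\sigma$ on $[0,1]$ and to finish by the intermediate value theorem. First I would pin down the boundary value at the pole: since the pole is the fixed point of the negation $x\mapsto -_{o}x$, evaluating oddness at $x=o$ gives $f(o)=-f(o)$, whence $f(o)=0$ and $g(0)=0$. By the definition of convexity, the restriction of the convex function $f$ to the geodesic $\sigma$ is a convex function of one real variable, so $g$ is convex and continuous on $[0,1]$. Non-negativity of $f$ gives $g\geq 0$, and differentiability of $f$ gives $g'(t)=\langle\mathrm{grad}(f)(\sigma(t)),\dot\sigma(t)\rangle$.

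Next I would apply the Corollary to Lemma~\ref{thm1}. Since $\mathrm{grad}(f)$ is a complete vector field, inequality~\eqref{eq4} holds along every geodesic, and in particular along $\sigma$; thus there is $t_{2}\in[0,1]$ with $g'(t_{2})=\langle\mathrm{grad}(f)(\sigma(t_{2})),\dot\sigma(t_{2})\rangle\geq 1$. Because $g$ is convex, $g'$ is non-decreasing; and because $g(0)=0$ is the minimum of the non-negative function $g$, we have $g'\geq 0$ on $[0,1]$. Consequently $g$ is non-decreasing, with slope at least $1$ on the whole interval $[t_{2},1]$.

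It then remains to verify the endpoint estimate $g(1)\geq 1$. Granting this, the continuity of $g$ together with $g(0)=0\leq 1\leq g(1)$ yields, by the intermediate value theorem, a parameter $t_{0}\in[0,1]$ with $g(t_{0})=f(\sigma(t_{0}))=1$, which is exactly the assertion. To produce the estimate I would combine the supporting-line inequality $g(1)\geq g(t_{2})+g'(t_{2})(1-t_{2})\geq g(t_{2})+(1-t_{2})$ furnished by convexity with the Mean Value Theorem applied on $[0,1]$, using $g(0)=0$ and the constant speed of the geodesic to convert the slope bound at $t_{2}$ into a bound on the total rise $g(1)-g(0)$.

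The main obstacle is precisely this endpoint estimate. A single pointwise slope inequality $g'(t_{2})\geq 1$ does not, on its own, force a non-negative convex function with $g(0)=0$ to reach the height $1$ by $t=1$; the degenerate profiles that stay below $1$ on all of $[0,1]$ must be excluded. I expect the resolution to lie in how completeness of $\mathrm{grad}(f)$ fixes the parametrization of $\sigma$ relative to the unit-time gradient flow used in the Corollary, which controls where the slope first attains the value $1$ and hence forces the accumulated increase $\int_{t_{2}}^{1}g'\,dt$ to be at least $1-g(t_{2})$. Carrying out this control quantitatively, and thereby ruling out the degenerate case, is the delicate point, and it is where the non-negativity of $f$ and the boundary condition at the pole enter decisively.
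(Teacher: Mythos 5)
Your setup---passing to $g=f\circ\sigma$, getting $g(0)=f(o)=0$ from oddness, and invoking the Corollary of Lemma \ref{thm1} to produce a parameter $t_2$ with $g'(t_2)=\langle grad(f)(\sigma(t_2)),\dot{\sigma}(t_2)\rangle\geq 1$---coincides with the paper's. But the step you yourself flag as the obstacle, the endpoint estimate $g(1)\geq 1$, is a genuine gap, and it cannot be closed from the facts you have assembled: a profile such as $g(t)=2\max\{0,\,t-\tfrac12\}^{2}$ is non-negative, convex, vanishes at $t=0$, and satisfies $g'(t_2)\geq 1$ for every $t_2\in[\tfrac34,1]$, yet $g(1)=\tfrac12<1$. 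So no argument using only ``$g$ convex, $g\geq 0$, $g(0)=0$, $g'(t_2)\geq 1$ somewhere in $[0,1]$'' can force $g$ to reach height $1$ inside $[0,1]$; your supporting-line bound $g(1)\geq g(t_2)+(1-t_2)$ degenerates precisely as $t_2\to 1$. Your hope that completeness of $grad(f)$ controls where $t_2$ falls is not realized anywhere in the paper either: completeness enters only to make \eqref{eq4} available for every geodesic (via the Corollary), not to locate $t_2$.

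The paper closes the gap by leaving the interval $[0,1]$ altogether. Since $M$ is complete, $\sigma$ extends past parameter $1$, and the supporting-line inequality is applied over a \emph{full unit} of parameter beyond $t_2$: the paper's convexity estimate $\langle grad(f)(\sigma(t_2)),\dot{\sigma}(t_2)\rangle\leq f(exp_{\sigma(t_2)}\dot{\sigma}(t_2))-f(\sigma(t_2))$, i.e. $g(t_2+1)\geq g(t_2)+g'(t_2)$, combined with $f\geq 0$ gives $f(exp_{\sigma(t_2)}\dot{\sigma}(t_2))\geq 1$. The intermediate value theorem on $[0,t_2+1]$, with $g(0)=0$, then yields the point where $f=1$. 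In short, the slope bound $\geq 1$ must be integrated over an interval of length $1$, and such an interval exists only on the extended geodesic; the fix is to leave $[0,1]$, not to sharpen estimates inside it. (Note, incidentally, that this places the point at a parameter in $[0,t_2+1]$, possibly beyond $1$, so the paper's own proof really establishes the conclusion on the geodesic ray issuing from $o$ rather than literally for some $t_0\in[0,1]$ as the lemma's statement asserts; your diagnosis that the degenerate profiles inside $[0,1]$ cannot be excluded is accurate---the paper does not exclude them, it sidesteps them.)
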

\begin{proof}
Let $f:(M,o)\rightarrow\mathbb{R}$ be an odd convex function such that $grad(f)$ as a complete vector field on $M$. Then by virtue of Lemma \ref{thm1}, every geodesic $\sigma:[0,1]\rightarrow M$ has a  point $q=\sigma(t_0),$ where $0<t_0<1$, such that
\begin{equation}
\langle grad(f)(\sigma(t_0)),\dot{\sigma}(t_0)\rangle\geq 1.
\end{equation}
By convexity of $f$ we get
\begin{equation}
\langle grad(f)(\sigma(t_0)),\dot{\sigma}(t_0)\rangle\leq f(exp_{\sigma(t_0)}\dot{\sigma}(t_0))-f(\sigma(t_0)).
\end{equation}
If $f$ is non-negative, then from the above two equations we get
\begin{equation*}
f(exp_{\sigma(t_0)}\dot{\sigma}(t_0))\geq 1.
\end{equation*}
Since $f$ is an odd convex function, so $f(o)=0$. Hence by intermediate value theorem, on the geodesic $\sigma$ with initial point $o$, there is a point where $f$ takes the value $1$. The point $exp_{\sigma(t_0)}\dot{\sigma}(t_0)$ depends on the choice of geodesic $\sigma$. Hence for different geodesics we get different points. 
\end{proof}

\begin{proof}[Proof of Theorem \ref{thm2}:]
 Let $f:(M,o)\rightarrow\mathbb{R}$ be a non-negative differentiable odd convex function such that $grad(f)$ is complete. Then for any two points $p,q\in M$ and for the unique minimal geodesic $\sigma:[0,1]\rightarrow M$ with $\sigma(0)=p$ and $\sigma(1)=q$, we get
$$f(\sigma(t))\leq A\ \forall t\in [0,1], \text{ where }A=(1-t)f(p)+tf(q).$$
Now by taking negative in both sides of the above equation, we get
$$-f(\sigma(t))\geq -A\ \forall t\in [0,1],$$
which implies that
$$f(-\sigma(t))\geq (1-t)f(-p)+tf(-q)\ \forall t\in [0,1],$$
where $-\sigma:[0,1]\rightarrow M$ is the unique minimal geodesic such that $-\sigma(0)=-p$ and $-\sigma(1)=-q$. But since $f$ is convex, it follows that
$$f(\sigma(t))= A,$$
where $p,q\in M$ and $\sigma:[0,1]\rightarrow M$ is the minimal geodesic with initial point $p$ and final point $q$.
Again in view of Corollary \ref{cor1}, there exists a point $p\in M$ such that $f(p)=0$. So, by using convexity of $f$, we get
\begin{equation}
f(\sigma(t))=0\ \forall t\in [0,1],
\end{equation}
where $\sigma(t)$ is the minimal geodesic with $\sigma(0)=o$ and $\sigma(1)=p$, which contradicts to the Lemma \ref{lem2}. Hence there does not exist any non-negative differentiable odd convex function in $(M,o)$ whose gradient is a complete vector field.
\end{proof}

\begin{lem}\cite{PV17}
Let $f:\mathbb{R}^n\rightarrow\mathbb{R}$ be a continuous function whose lower level sets are bounded. Then $f$ has the global minimum.
\end{lem}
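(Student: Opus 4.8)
The plan is to reduce the statement to the extreme value theorem by exploiting a single nonempty bounded lower level set. First I would fix an arbitrary point $x_0\in\mathbb{R}^n$ and set $c=f(x_0)$, and then focus attention on the lower level set $L_c=\{x\in\mathbb{R}^n:f(x)\le c\}$. By construction $L_c$ is nonempty, since it contains $x_0$, and it is bounded by the hypothesis on the lower level sets of $f$.

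Next I would show that $L_c$ is compact. Since $f$ is continuous and $(-\infty,c]$ is closed in $\mathbb{R}$, the preimage $L_c=f^{-1}\big((-\infty,c]\big)$ is closed in $\mathbb{R}^n$. Combining this closedness with the boundedness already noted, the Heine--Borel theorem gives that $L_c$ is compact. The restriction of the continuous function $f$ to the compact set $L_c$ then attains its minimum by the extreme value theorem, so there exists a point $x^*\in L_c$ with $f(x^*)\le f(x)$ for every $x\in L_c$.

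Finally I would verify that $x^*$ is in fact a global minimizer over all of $\mathbb{R}^n$, not merely over $L_c$. For any $x\notin L_c$ we have $f(x)>c$ by definition of $L_c$, while $c=f(x_0)\ge f(x^*)$ because $x_0\in L_c$ and $x^*$ minimizes $f$ on $L_c$. Hence $f(x^*)\le f(x)$ holds for all $x\in\mathbb{R}^n$, establishing that $f$ attains its global minimum at $x^*$.

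I do not anticipate any serious obstacle, as the argument is a standard combination of the Heine--Borel theorem and the extreme value theorem. The only point requiring slight care is the last step: one must check that minimizing over the bounded set $L_c$ truly yields the global minimum, and this is settled by the observation that every point outside $L_c$ carries a value strictly larger than $c\ge f(x^*)$.
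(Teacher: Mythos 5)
Your proof is correct and complete. Note, however, that the paper offers no proof of this lemma at all: it is quoted directly from \cite{PV17} as a known result, so there is no in-paper argument to compare against. Your argument is the standard one --- fix $x_0$, observe that $L_c=f^{-1}\bigl((-\infty,c]\bigr)$ with $c=f(x_0)$ is nonempty, closed by continuity, bounded by hypothesis, hence compact by Heine--Borel; apply the extreme value theorem on $L_c$; and then extend minimality to all of $\mathbb{R}^n$ via $f(x)>c\geq f(x^*)$ for $x\notin L_c$ --- and it correctly handles the one point that requires care, namely that the minimizer over $L_c$ is genuinely global.
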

\begin{thm}
Let $f:\mathbb{R}^n(\simeq T_oM)\rightarrow\mathbb{R}$ be a continuous function whose lower level sets are
bounded. Then the function $h:(M,o)\rightarrow\mathbb{R}$ defined by
$$h(x)=f(exp_o^{-1}(x)), \text{ for }x\in M,$$
has the global minimum.
\end{thm}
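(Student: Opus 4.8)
The plan is to transport the global minimum supplied by the preceding Lemma from $\mathbb{R}^n\simeq T_oM$ back to $M$ using the exponential map, exploiting the fact that a pole makes this map a bijection. Since $o$ is a pole of $M$, the exponential map $exp_o:T_oM\rightarrow M$ is a global diffeomorphism, so its inverse $exp_o^{-1}:M\rightarrow T_oM\simeq\mathbb{R}^n$ is a well-defined bijection (and in particular continuous). Consequently $h=f\circ exp_o^{-1}$ is a continuous function on $M$, so that the composition on which $h$ is built is well behaved; the essential feature I will use, however, is not continuity but that $exp_o^{-1}$ is one-to-one and onto.

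First I would invoke the Lemma for $f$: as $f$ is continuous with bounded lower level sets, it attains a global minimum at some $v_0\in\mathbb{R}^n\simeq T_oM$, that is, $f(v_0)\leq f(v)$ for every $v\in T_oM$. Next I would set $x_0=exp_o(v_0)\in M$ and claim that $h$ attains its global minimum precisely at $x_0$. For an arbitrary $x\in M$, writing $v=exp_o^{-1}(x)\in T_oM$ and using the minimizing property of $v_0$ together with $exp_o^{-1}(x_0)=v_0$, one obtains
$$h(x)=f(exp_o^{-1}(x))=f(v)\geq f(v_0)=f(exp_o^{-1}(x_0))=h(x_0).$$
Since $x\in M$ was arbitrary, this shows $h(x_0)\leq h(x)$ for all $x\in M$, so $h$ possesses a global minimum, attained at $x_0$.

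I do not expect any genuine obstacle here. The only point that must be verified is the bijectivity of $exp_o^{-1}$, which is exactly the defining property of the pole $o$; once this is recorded, the remainder is simply the elementary observation that precomposition by a bijection neither creates nor destroys the existence of a global minimum, and the displayed chain of equalities and the single inequality make this explicit. Thus the whole argument reduces to the Lemma plus the pole hypothesis.
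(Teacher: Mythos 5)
Your proposal is correct and follows essentially the same route as the paper: invoke the lemma to get a minimizer $v_0$ of $f$ in $T_oM\simeq\mathbb{R}^n$, push it forward to $x_0=exp_o(v_0)$ via the pole's global diffeomorphism, and transport the minimizing property through the bijection $exp_o^{-1}$. The only cosmetic difference is that you argue directly ($h(x)\geq h(x_0)$ for all $x$) while the paper phrases the same comparison as a proof by contradiction.
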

\begin{proof}
Since $o\in M$ is a pole of $M$, hence the map $exp_o^{-1}:M\rightarrow T_oM$ is a global diffeomorphism. Now there is a natural isomorphism between $T_oM$ and $\mathbb{R}^n$ by using the standard orthonormal basis, so we can consider the function $f$ from $T_oM$ to $\mathbb{R}$. Since the lower level set of $f$ is bounded hence it has the global minimum $u_0\in T_oM$. We claim that $x_0=exp_o(u_0)$ is the global minimum point of $M$. \\
\indent Suppose there exists a point $y_0\in M$ such that $h(x_0)>h(y_0)$. But $h(y_0)=f(exp_o^{-1}(y_0))$. So by taking $v_0=exp_o^{-1}(y_0)$ we get that $f(u_0)>f(v_0)$, which is a contradiction since $u_0$ is the global minimum. Hence $x_0$ is the global minimum point of $h$.
\end{proof}

 \par Let $f:(M,o)\rightarrow\mathbb{R}$ be an even convex function. Then $f(x)=f(-_ox)\ \forall x\in M$. Let $\sigma:[0,1]\rightarrow M$ be a geodesic such that $\sigma(0)=x$ and $\sigma(1)=-_ox$. Then by convexity we get
\begin{equation*}
f(\sigma(t))\leq (1-t)f(x)+tf(-_ox)=f(x).
\end{equation*}
Hence
\begin{equation*}
\lim\limits_{t\rightarrow 0}\frac{f(\sigma(t))-f(x)}{t}=\langle grad(f)(\sigma(0)),\dot{\sigma}(0)\rangle \leq 0.
\end{equation*}
Consider the geodesic $\tilde{\sigma}:[0,1]\rightarrow M$ defined by $\tilde{\sigma}(t)=\sigma(1-t)$. Then $\dot{\tilde{\sigma}}(t)=-\dot{\sigma}(1-t)\ \forall t\in [0,1].$ Similarly, we get
$\langle grad(f)(\tilde{\sigma}(0)),\dot{\tilde{\sigma}}(0)\rangle \leq 0.$ This implies that 
$\langle grad(f)(\sigma(1)),\dot{\sigma}(1)\rangle \geq 0.$
Thus by intermediate value theorem there exists $t_0\in [0,1]$ such that $\langle grad(f)(\sigma(t_0)),\dot{\sigma}(t_0)\rangle = 0.$ Hence by changing the parameter of geodesic, we can conclude the following:
\begin{thm}
Let $f:(M,o)\rightarrow\mathbb{R}$ be an even convex function. Then for each normal geodesic $\sigma:[0,l]\rightarrow M$ connecting $x$ and $-_ox$, there exists a point $\sigma(t_0)$, for $0<t_0<l$, such that
\begin{equation}
\langle grad(f)(\sigma(t_0)),\dot{\sigma}(t_0)\rangle = 0.
\end{equation}
\end{thm}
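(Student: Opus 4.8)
The plan is to reduce the statement to an elementary fact about convex functions of a single real variable. First I would set $\phi(t)=f(\sigma(t))$ for $t\in[0,l]$. Since $\sigma$ is a geodesic and $f$ is convex on $M$, the Definition of convexity applied to $\sigma$ shows immediately that $\phi$ is a convex function on the interval $[0,l]$. Moreover, as $f$ is differentiable, the chain rule gives that $\phi$ is differentiable with
\begin{equation*}
\phi'(t)=df_{\sigma(t)}(\dot{\sigma}(t))=\langle grad(f)(\sigma(t)),\dot{\sigma}(t)\rangle,
\end{equation*}
which is precisely the quantity whose vanishing we must exhibit. The whole problem is thus to produce a point where $\phi'=0$.

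Next I would exploit evenness. Because $\sigma(0)=x$ and $\sigma(l)=-_ox$, the hypothesis $f(x)=f(-_ox)$ gives $\phi(0)=\phi(l)$. Writing an interior point as $t=sl$ with $s\in[0,1]$ and feeding the two endpoints into the convexity inequality for $\phi$ yields
\begin{equation*}
\phi(t)\leq (1-s)\phi(0)+s\,\phi(l)=\phi(0)=\phi(l)\qquad\text{for all }t\in[0,l].
\end{equation*}
Hence both endpoints are maximizers of $\phi$ over $[0,l]$, and $\phi$ never exceeds its common endpoint value.

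From here two routes finish the argument. One route mirrors the computation preceding the statement: the inequality $\phi(t)\leq\phi(0)$ for $t$ near $0$ forces $\phi'(0)\leq 0$, and applying the same reasoning to the reversed geodesic $\tilde{\sigma}(t)=\sigma(l-t)$, whose initial velocity is $-\dot{\sigma}(l)$, forces $\phi'(l)\geq 0$; then the intermediate value theorem applied to $t\mapsto\langle grad(f)(\sigma(t)),\dot{\sigma}(t)\rangle$ produces a zero. The route I would prefer, however, is to note that a convex function $\phi$ with $\phi(0)=\phi(l)$ and $\phi\leq\phi(0)$ throughout attains its minimum at an interior point $t_0$ (the endpoints being maxima), unless $\phi$ is constant, in which case every interior point works; Fermat's theorem at an interior minimizer of the differentiable function $\phi$ then gives $\phi'(t_0)=0$ directly.

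The main obstacle is delicate rather than deep, and it is what makes the second route cleaner. For the intermediate value argument one must know that $t\mapsto\langle grad(f)(\sigma(t)),\dot{\sigma}(t)\rangle$ is continuous, which strictly requires $grad(f)$ to vary continuously rather than $f$ merely differentiable; and one must separately argue that the zero can be taken in the open interval $(0,l)$ rather than at an endpoint. The interior-minimum argument avoids both difficulties at once, since Fermat's theorem needs only differentiability of $\phi$ and automatically places $t_0$ strictly between $0$ and $l$. I would therefore present the interior-minimum version and retain the endpoint-derivative estimates only as motivation, finally recovering the general parametrization by a reparametrization of $\sigma$.
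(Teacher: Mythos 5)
Your proposal is correct, and your preferred route differs from the paper's in its final step. The paper's argument is exactly your ``first route'': from $f(\sigma(t))\leq f(x)=f(-_ox)$ it extracts the endpoint derivative inequalities $\langle grad(f)(\sigma(0)),\dot{\sigma}(0)\rangle\leq 0$ and, via the reversed geodesic $\tilde{\sigma}(t)=\sigma(l-t)$, $\langle grad(f)(\sigma(l)),\dot{\sigma}(l)\rangle\geq 0$, and then invokes the intermediate value theorem for $t\mapsto\langle grad(f)(\sigma(t)),\dot{\sigma}(t)\rangle$. Your interior-minimum-plus-Fermat finish is genuinely different and buys two things that the paper leaves implicit or unaddressed: (i) the paper's IVT step tacitly requires continuity of $\phi'(t)=\langle grad(f)(\sigma(t)),\dot{\sigma}(t)\rangle$, which does not follow from mere differentiability of $f$ --- though it can be rescued without your route, since $\phi'$ is monotone non-decreasing (convexity) and has the Darboux property (being a derivative), hence is continuous; (ii) the paper's IVT argument only places $t_0\in[0,l]$, whereas the theorem asserts $0<t_0<l$, and your argument delivers interiority automatically: if $\phi$ is non-constant its minimum over $[0,l]$ is strictly below the common endpoint value, so any minimizer is interior, and if $\phi$ is constant every interior point works. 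One small caveat: the theorem as stated says only ``even convex function,'' so differentiability of $f$ (needed for Fermat's theorem and for $grad(f)$ to make sense at all) is an implicit hypothesis in both your argument and the paper's; it would be worth stating it explicitly.
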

\section{Isoperimetric type inequality in manifold with a pole}
 If $f:M\rightarrow \mathbb{R}$ is a convex function, then Greene and Wu \cite{GW71} proved that $f$ is subharmonic, i.e., $\Delta f\geq 0$. There is a mean value inequality on a Riemannian manifold \cite[Section 6]{SY94}. It compares the value of a
subharmonic function at the center of a ball with its mean value in the ball
\begin{thm}\cite{SY94}
Let $B_R(p)$ be a geodesic ball in $M$. Suppose that the sectional curvature $K_M\leq k$ for some constant $k$ and $R<inj(M,g)$. Then for any real valued smooth function $f$ with $\Delta f\geq 0$ and $f\geq 0$ on $M$,
\begin{equation}
f(p)\leq \frac{1}{V_k(R)}\int_{B_R(p)}f dV,
\end{equation}
where $V_k(R)=\text{Vol}(B_R,g_k)$ is the volume of a ball of radius $R$ in the space form of constant curvature $k$, $dV$ is the volume form and $inj(M,g)$ is the injective radius of $M$.
\end{thm}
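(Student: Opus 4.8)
The plan is to reduce the inequality to a monotonicity statement for a normalized spherical average of $f$, with the only geometric input being the Laplacian comparison theorem applied to the distance function. Write $r(x)=d(p,x)$, which is smooth on the punctured ball $B_R(p)\setminus\{p\}$ precisely because $R<inj(M,g)$, and use geodesic polar coordinates $(\rho,\theta)\in(0,R)\times\mathbb{S}^{n-1}$ via $x=exp_p(\rho\theta)$. In these coordinates the Riemannian volume form factors as $dV=J(\rho,\theta)\,d\rho\,d\theta$ for a smooth positive density $J$, the area form on the geodesic sphere $\partial B_\rho(p)$ is $dA=J(\rho,\theta)\,d\theta$, and in the model space form of curvature $k$ the corresponding density is $\mathrm{sn}_k(\rho)^{n-1}$, where $\mathrm{sn}_k$ solves $\mathrm{sn}_k''+k\,\mathrm{sn}_k=0$ with $\mathrm{sn}_k(0)=0,\ \mathrm{sn}_k'(0)=1$. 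In particular $V_k(R)=\omega_{n-1}\int_0^R\mathrm{sn}_k(\rho)^{n-1}\,d\rho$, where $\omega_{n-1}=\mathrm{Vol}(\mathbb{S}^{n-1})$.

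Two ingredients drive the argument. First, since $K_M\le k$, the Hessian comparison theorem gives $\mathrm{Hess}\,r\ge \mathrm{ct}_k(r)\,(g-dr\otimes dr)$ with $\mathrm{ct}_k=\mathrm{sn}_k'/\mathrm{sn}_k$; taking traces yields the Laplacian comparison $\Delta r\ge (n-1)\,\mathrm{ct}_k(r)$. Via the standard identity $\Delta r=\partial_\rho\log J$, this says $\partial_\rho J\ge (n-1)\,\mathrm{ct}_k(\rho)\,J$. Second, Green's identity together with $\Delta f\ge 0$ gives, for every $\rho\in(0,R)$,
\[
\int_{\partial B_\rho(p)}\partial_r f\,dA=\int_{B_\rho(p)}\Delta f\,dV\ge 0 .
\]

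With these in hand I would study the normalized average
\[
\tilde f(\rho)=\frac{1}{\omega_{n-1}\,\mathrm{sn}_k(\rho)^{n-1}}\int_{\partial B_\rho(p)}f\,dA
=\frac{1}{\omega_{n-1}\,\mathrm{sn}_k(\rho)^{n-1}}\int_{\mathbb{S}^{n-1}}f\,J\,d\theta ,
\]
and show it is non-decreasing. Differentiating and regrouping,
\[
\tilde f'(\rho)=\frac{\mathrm{sn}_k(\rho)^{-(n-1)}}{\omega_{n-1}}\left[\int_{\mathbb{S}^{n-1}}(\partial_\rho f)\,J\,d\theta+\int_{\mathbb{S}^{n-1}}f\big(\partial_\rho J-(n-1)\mathrm{ct}_k(\rho)J\big)\,d\theta\right].
\]
The first bracketed integral equals $\int_{\partial B_\rho(p)}\partial_r f\,dA\ge 0$ by the Green computation; the second is $\ge 0$ because $f\ge 0$ and $\partial_\rho J-(n-1)\mathrm{ct}_k(\rho)J=\big(\Delta r-(n-1)\mathrm{ct}_k(r)\big)J\ge 0$ by the Laplacian comparison. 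Hence $\tilde f'\ge 0$. Since $J/\mathrm{sn}_k(\rho)^{n-1}\to 1$ as $\rho\to 0$ and $f$ is continuous, $\tilde f(\rho)\to f(p)$, so monotonicity gives $\tilde f(\rho)\ge f(p)$ for all $\rho\in(0,R)$.

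Finally I would integrate in $\rho$ via the coarea formula (using $\norm{\nabla r}=1$):
\[
\int_{B_R(p)}f\,dV=\int_0^R\!\!\int_{\partial B_\rho(p)}f\,dA\,d\rho=\int_0^R \tilde f(\rho)\,\omega_{n-1}\,\mathrm{sn}_k(\rho)^{n-1}\,d\rho\ge f(p)\int_0^R\omega_{n-1}\mathrm{sn}_k(\rho)^{n-1}\,d\rho=f(p)\,V_k(R),
\]
which is exactly the claim. The crux is the Laplacian comparison step: it is the only place where $K_M\le k$ enters, and it is what produces the sign $\partial_\rho J\ge (n-1)\mathrm{ct}_k(\rho)J$; together with $f\ge 0$ this makes the second integral non-negative, and without it $\tilde f$ need not be monotone. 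The remaining points—smoothness of $r$ on the punctured ball (guaranteed by $R<inj(M,g)$), the validity of Green's identity on $B_\rho(p)$, and the $\rho\to 0$ limit—are routine but should be handled with mild care.
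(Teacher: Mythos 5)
Your proof is correct, but there is nothing in the paper to compare it against: this theorem is imported verbatim from Schoen--Yau \cite{SY94} and the paper gives no proof of it, only the citation, before applying it in the final section. What you have written is, in substance, the standard proof (and essentially the argument in the cited reference): the Hessian comparison theorem under the upper sectional curvature bound $K_M\leq k$ gives $\Delta r\geq (n-1)\,\mathrm{sn}_k'(r)/\mathrm{sn}_k(r)$, equivalently $\partial_\rho J\geq (n-1)\,\mathrm{ct}_k(\rho)J$ for the polar volume density; Green's identity with $\Delta f\geq 0$ gives the nonnegativity of $\int_{\partial B_\rho(p)}\partial_r f\,dA$; and these two signs, together with $f\geq 0$, make the $\mathrm{sn}_k$-normalized spherical average $\tilde f(\rho)$ monotone non-decreasing, after which the coarea formula and the limit $\tilde f(\rho)\to f(p)$ finish the argument. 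Your derivative computation for $\tilde f$ and the identification $\Delta r=\partial_\rho\log J$ are both right, and you correctly flag that $R<inj(M,g)$ is what makes $r$ smooth on the punctured ball and makes Green's identity on $B_\rho(p)$ legitimate. One caveat worth recording: when $k>0$ the Hessian comparison in the direction you need it (lower bound on $\mathrm{Hess}\,r$) is only valid for $r<\pi/\sqrt{k}$, and indeed $V_k(R)$ only makes sense for $R\leq\pi/\sqrt{k}$; this restriction is silently omitted in the statement as quoted in the paper (Schoen--Yau's original formulation includes it), so your proof inherits the same implicit hypothesis rather than creating a new gap.
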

We will use the above theorem in the manifold with a pole to prove an isoperimetric type inequality. Any ratio of volume and surface area of a compact subset is called isoperimetric type inequality, for more information see \cite{OSS78}. Since $(M,o)$ is a manifold with a pole, its injective radius is infinite. Here we will compare the volume of an unit ball in a manifold whose sectional curvature is a constant $k$ with surface area of unit ball in a manifold with sectional curvature less than $k$ with respect to a convex function. 
\begin{thm}
Let $(M,o)$ be a Riemannian manifold with a pole with sectional curvature $K_M\leq k$, for some constant $k$, and $f:(M,o)\rightarrow \mathbb{R}$ be a smooth convex function such that $f\geq 0$. Then
\begin{equation}
2\leq \frac{\omega_M}{V_k}+\frac{2}{V_k}\int_{\partial(B(o))}\frac{f(x)}{f(o)}dx,
\end{equation}
where $\omega_M$ is the surface area of unit ball in $M$ and $V_k$ is the volume of unit ball in the space form of constant curvature $k$ and $B(o)$ is the unit ball with center $o$.
\end{thm}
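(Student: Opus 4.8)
The plan is to combine the mean value inequality quoted above with the convexity of $f$ along the geodesics issuing from the pole $o$. Since $f$ is convex it is subharmonic, $\Delta f\ge 0$ (Greene--Wu \cite{GW71}), and $f\ge 0$ by hypothesis. Moreover $(M,o)$ has a pole, so its injectivity radius is infinite and in particular $R=1<inj(M,g)$; hence the mean value inequality of \cite{SY94} applies to the geodesic ball $B(o)=B_1(o)$ with $p=o$ and $R=1$, giving
\[
  V_k\,f(o)\le \int_{B(o)} f\,dV .
\]
I would take this as the first of the two ingredients. The second is a purely convex-geometric estimate of the volume integral $\int_{B(o)} f\,dV$ in terms of the boundary data $f(o)$ and $f|_{\partial B(o)}$.

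For the second ingredient I would use geodesic polar coordinates around $o$, which are global because $o$ is a pole: every point of $B(o)-\{o\}$ is written uniquely as $\exp_o(rv)$ with $r\in(0,1]$, $v\in\mathbb{S}^{n-1}$, and $dV=\mathcal{A}(r,v)\,dr\,dv$, where $\mathcal{A}$ is the area density of the geodesic spheres; in particular $dx=\mathcal{A}(1,v)\,dv$ on $\partial B(o)$ and $\omega_M=\int_{\mathbb{S}^{n-1}}\mathcal{A}(1,v)\,dv$. Along each unit-speed radial geodesic $\gamma_v(r)=\exp_o(rv)$ the function $g(r)=f(\gamma_v(r))$ is convex with $g(0)=f(o)$ and $g(1)=f(x)$ for $x=\gamma_v(1)\in\partial B(o)$, so $g(r)\le(1-r)f(o)+rf(x)$ and hence $\int_0^1 g(r)\,dr\le \tfrac12\big(f(o)+f(x)\big)$. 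Using $f\ge 0$ together with the monotonicity $\mathcal{A}(r,v)\le\mathcal{A}(1,v)$ for $r\in[0,1]$ (which follows from the curvature bound $K_M\le k$ via the Hessian/Laplacian comparison for the distance function), I would estimate the radial integral against $\mathcal{A}(1,v)$ and integrate over $\mathbb{S}^{n-1}$ to obtain
\[
  \int_{B(o)} f\,dV\le \frac{f(o)}{2}\,\omega_M+\frac12\int_{\partial B(o)} f(x)\,dx .
\]

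Combining the two ingredients yields $V_k f(o)\le \tfrac12 f(o)\,\omega_M+\tfrac12\int_{\partial B(o)} f\,dx$. Dividing by $f(o)$ (which is positive, so that the ratio $f(x)/f(o)$ in the statement is defined) and using $f\ge 0$ to enlarge the coefficient $\tfrac12$ of the boundary term to $1$, I arrive at $2\le \frac{\omega_M}{V_k}+\frac{2}{V_k}\int_{\partial B(o)}\frac{f(x)}{f(o)}\,dx$, as claimed; in fact the computation produces the sharper constant $1$ in front of the integral. The step I expect to be the main obstacle is the passage from the volume integral to the boundary integral, i.e.\ justifying the density comparison $\mathcal{A}(r,v)\le\mathcal{A}(1,v)$: this is clean when $k\le 0$ (the geodesic spheres have positive mean curvature, so $\mathcal{A}(\cdot,v)$ is nondecreasing), but for $k>0$ it requires restricting to the range in which the comparison spheres are still expanding, so some care with the admissible radius is needed.
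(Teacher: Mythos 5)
Your proposal is correct and follows the same two-step skeleton as the paper --- the Schoen--Yau mean value inequality on the unit ball centred at the pole, then convexity of $f$ along the radial geodesics $\gamma_x$ --- but it differs at exactly the step you flag as the main obstacle, and there your version is the right one while the paper's is not. The paper passes from the volume integral to the boundary by asserting the \emph{identity}
\[
\int_{B(o)} f\, dV \;=\; \int_{\partial B(o)}\Big\{\int_0^1 f\circ\gamma_x(t)\,dt\Big\}\,dx,
\]
i.e.\ it integrates in polar coordinates with the Jacobian suppressed; already in $\mathbb{R}^n$ this is false (the area density there is $r^{n-1}$, not $1$). Your replacement of this identity by an inequality, using $f\ge 0$ together with the density comparison $\mathcal{A}(r,v)\le\mathcal{A}(1,v)$ coming from the curvature bound, is precisely the repair that step needs, and it is the only point of the argument (beyond the mean value inequality itself) where the hypothesis $K_M\le k$ actually enters. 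Your caveat about $k>0$ is also genuine and applies equally to the paper: the Hessian comparison gives $\partial_r\log\mathcal{A}(r,v)\ge (n-1)\sqrt{k}\cot(\sqrt{k}\,r)$, so monotonicity of the density is guaranteed only up to radius $\pi/(2\sqrt{k})$, and for $k>\pi^2/4$ neither your argument nor the paper's is complete as written; the existence of a pole does not by itself restore it. Finally, your remark that the computation yields the sharper constant $1$ rather than $2$ in front of the boundary term is correct: the paper's factor $2$ arises from a slip in its last displayed chain, where $\frac{1}{2V_k}\int_{\partial B(o)}f(x)\,dx$ silently becomes $\frac{1}{V_k}\int_{\partial B(o)}f(x)\,dx$ (harmless, since $f\ge 0$ makes the paper's weaker inequality follow from your sharper one). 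Note also that both you and the paper must assume $f(o)>0$ in order to divide; that is a defect of the theorem's statement rather than of your argument.
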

\begin{proof}
Since $f$ is convex, so $f$ is subharmonic function, i.e., $\Delta f\geq 0$. Now taking the unit ball $B(o)$ with center $o$ and applying the above theorem we get
\begin{eqnarray}
f(o)&\leq& \frac{1}{V_k}\int_{B(o)}f dV,\\
&=& \frac{1}{V_k}\int_{\partial B(o)}\Big\{\int_{0}^{1}f\circ \gamma_x(t)dt\Big\}dx,
\end{eqnarray}
where $\gamma_x:[0,1]\rightarrow M$ is the minimal geodesic such that $\gamma_x(0)=o$ and $\gamma_x(1)=x$ for each $x\in \partial B(o)$. By using convexity of $f$ we obtain
\begin{eqnarray*}
f(o)&\leq & \frac{1}{V_k}\int_{\partial B(o)}\Big\{\int_{0}^{1}((1-t)f(o)+tf(x))dt\Big\}dx\\
&=& \frac{1}{V_k}\int_{\partial B(o)}\Big\{f(o)\int_{0}^{1}(1-t)dt+f(x)\int_{0}^{1}tdt\Big\}dx\\
&=& \frac{1}{2V_k}\int_{\partial B(o)}(f(o)+f(x)) dx\\
&=& \frac{\omega_M}{2V_k}f(o)+\frac{1}{V_k}\int_{\partial(B(o))}f(x)dx.
\end{eqnarray*}
Since $f\geq 0$, hence dividing both sides by $f(o)$ we get our result.
\end{proof}
\section*{Acknowledgement}
 The second author greatly acknowledges to The UGC, Government of India for the award of JRF.


\begin{thebibliography}{11}

\bibitem{AFL05}
Azagra, D., Ferrera, J. and Lopez-Mesas, F., \textit{Nonsmooth analysis and Hamilton-Jacobi equations on Riemannian manifolds}, 
J. Funct. Anal., \textbf{220(2)} (2005), 304–-361.

\bibitem{BN69}
Bishop, R. L. and O'Neill, B., \textit{Manifolds of negative curvature}, 
Trans. Amer. Math. Soc., \textbf{145} (1969), 1--49.

     \bibitem{GW71}
    Greene, R. E. and Wu. H.,
     \textit{On the subharmonicity and plurisubharmonicity of a geodesic convex function}, 
      Indiana Univ. Math. J., \textbf{22} (1971), 641--653.
  \bibitem{GW76}
     Greene, R. E. and Wu. H.,
      \textit{$C^\infty$ convex functions and manifolds of positive curvature}, 
       Acta Mathematica, \textbf{137(1)} (1976), 209--245.
    \bibitem{GM69}
       Gromoll, D. and Meyer, W.,
        \textit{On complete open manifolds of positive curvature}, 
        Ann. Math., \textbf{90(1)} (1969), 75--90.
 \bibitem{ITO80}
 Itoh, M., \textit{Some geometrical aspects of Riemannian manifolds with a pole}, 
  Tsukuba J. Math., \textbf{4(2)} (1980), 291--301.
   \bibitem{AAS11}
    Iqbal, A., Ahmad, I. and Ali, S.,
    \textit{Some properties of geodesic semi-E-convex functions}, 
    Nonlinear Anal., \textbf{74} (2011), 6805--6813.
    \bibitem{IAA12}
    Iqbal, A., Ali, S. and Ahmad, I., \textit{On geodesic E-convex sets, geodesic E-convex functions and E-epigraphs}, 
    J. Optim. Theory Appl., \textbf{155(1)} (2012), 239--251.
\bibitem{JOS11}
Jost, J., \textit{Riemannian geometry and geometric analysis}, 
Springer, Berlin, \textbf{2011}.
 \bibitem{KAS81}
 Kasue, A., \textit{On Riemannian manifolds with a pole}, 
 Osaka J. Math., \textbf{18} (1981), 109--113.
 
   \bibitem{AW15}
  Kilicman, A. and Saleh,W.,
   \textit{Some properties of geodesic
   semi E-b-vex functions}, 
    Open Math., \textbf{13} (2015), 795--804.
\bibitem{MAT03}
 Matou\v{s}ek, J., \textit{Using the Borsuk-Ulam theorem}, 
 Springer-Verlag, Berlin, \textbf{2003}.
  \bibitem{NMS17}
  Neto, C. J. X., Melo, \'{I}. D. L. and Sousa, P. A. A., \textit{Convexity and some geometric properties}, 
  J. Optim. Theory Appl., \textbf{173} (2017), 459--470.
  \bibitem{OSS78}
     Osserman, R., \textit{The isoperimetric inequality}, 
     Bull. Amer. Math. Soc., \textbf{86(6)} (1978), 1182--1238.
 \bibitem{PV17}
    Pavi\'c, Z. and Novoselac, V., \textit{Investigating an over-determined system of linear equations by using convex function}, 
    Hacet. J. Math. Stat., \textbf{46(5)} (2017), 865--874.
      \bibitem{RAP97}
      Rapcsak, T.,
      \textit{Smooth nonlinear optimization in $\mathbb{R}^n$}, 
      Kluwer Academic Publisher, \textbf{1997}.
      \bibitem{SY94}
           Schoen, R. and Yau, S. T.,
           \textit{Lectures on differential geometry. Conference Proceedings
           and Lecture Notes in Geometry and Topology}, 
           I. International Press, Cambridge, MA, \textbf{1994}.
       \bibitem{CA17}
        Shaikh, A. A., Iqbal, A. and Mondal, C. K.,
         \textit{Some results on $\varphi$-convex functions and geodesic $\varphi$-convex functions},
        	arXiv:1706.05841 [math.DG] (2017).
 \bibitem{UDR94}
 Udri\c{s}te, C.,
 \textit{Convex functions and optimization methods on Riemannian manifolds}, 
 Kluwer Academic Publisher, \textbf{1994}.
 
  \bibitem{YAU74}
 Yau, S. T.,
  \textit{Non-existence of continuous convex functions on certain Riemannian manifolds}, 
   Math. Ann., \textbf{207} (1974), 269--270.
   
        
\end{thebibliography}
\end{document}